\newtheorem{theorem}{Theorem}
\newtheorem{corollary}[theorem]{Corollary}
\newtheorem{lemma}[theorem]{Lemma}
\theoremstyle{definition}
\theoremstyle{remark}
\newcommand{\al}{\alpha}
\newcommand{\de}{\delta}
\newcommand{\ep}{\epsilon}
\newcommand{\ga}{\gamma}
\newcommand{\la}{\lambda}
\newcommand{\De}{\Delta}
\newcommand{\Si}{\Sigma}
\newcommand{\Om}{\Omega}
\newcommand{\tv}{\widetilde{v}}
\def\RR{\mathbb{R}}
\def\BB{\mathbb{B}}
\newcommand{\cH}{{\mathcal H}}
\newcommand{\pa}{\partial}
\newcommand{\pd}{\partial}
\newcommand\minus\backslash
\newcommand{\id}{{\rm id}}
\newcommand\lan\langle
\newcommand\ran\rangle
\DeclareMathOperator\Div{div}
\renewcommand\leq\leqslant
\renewcommand\geq\geqslant
\newlength{\intwidth}
\begin{document}

\title[Minimal graphs with micro-oscillations]{Minimal graphs with
  micro-oscillations}

\author{Alberto Enciso}
\address{Instituto de Ciencias Matem\'aticas, Consejo Superior de
  Investigaciones Cient\'\i ficas, 28049 Madrid, Spain}
\email{aenciso@icmat.es, mag.ferrero@icmat.es, dperalta@icmat.es}

\author{M\textordfeminine \'Angeles Garc\'\i a-Ferrero}

\author{Daniel Peralta-Salas}

%
%
\begin{abstract}
  We show that there are minimal graphs in $\RR^{n+1}$ whose
  intersection with the portion of the horizontal hyperplane contained
  in the unit ball has any prescribed geometry, up to a small deformation. The proof
  hinges on the construction of minimal graphs that are almost flat
  but have small oscillations whose geometry we can control.
\end{abstract}
\maketitle

\section{Introduction}

Let us consider minimal graphs on the unit ball $\BB^n$ of $\RR^n$.
More precisely, let
$u$ be a function satisfying the equation
\begin{equation}\label{eq:MS}
\Div\bigg(\frac{\nabla u}{\sqrt{1+|\nabla u|^2}}\bigg)=0
\end{equation}
in $\BB^n$. This is equivalent to saying that the graph of~$u$,
\[
\Si_u:=\big\{ (x,u(x)): x\in\BB^n\big\}
\]
is a minimal hypersurface of $\RR^{n+1}$.

Area bounds for minimal graphs play a key role in the theory of minimal surfaces.
Since $\Si_u$ is a global minimizer of the area
functional among the surfaces with fixed boundary, if $u$ is bounded
on $\BB^n$ it is clear that the $n$-dimensional
Hausdorff measure  $\cH^n(\Si_u)$ is bounded by a non-uniform
constant that depends on the oscillation of~$u$, $\textrm{osc } u:=\max_{\BB^n} u-\min_{\BB^n}
u$.
On the contrary, if we just specify  the value of the oscillation of $u$, we can consider a hyperplane of slope $\frac {\textrm{osc } u}2$, which is a minimal hypersurface whose area $\cH^n(\Si_u)$ is bounded from below by a constant increasing as ${\textrm{osc } u}$. Therefore,  there is not a uniform estimate holding for the area of minimal graphs on $\BB^n$.

Our objective in this note is to explore the non existence of a
higher-codimension analog of the uniform estimate. We will be interested in bounds for
the ${(n-1)}$-dimensional Hausdorff measure of the hypersurface, or
rather of its transverse intersection with a hyperplane. To this end, let us
denote by $\Pi$ the portion of the horizontal hyperplane that is
contained in the unit ball of $\RR^{n+1}$:
\[
\Pi:=\{x\in\RR^{n+1}:x_{n+1}=0\,,\; |x|<1\}\,.
\]

Actually, our goal is to show a more general result claiming that we can prescribe the geometry of the   intersection of a minimal graph on $\BB^n$ with $\Pi$, up to a small deformation:

\begin{theorem}\label{T.level}
Let $S$ be a compact, connected, properly embedded, orientable
hypersurface of $\BB^n$ with non\-empty boundary.
Then, for any integer~$k$ and $\ep>0$, there is a minimal
graph over the unit $n$-ball and an open subset $\Pi'\subset\Pi$ such that the intersection $\Si_u\cap \Pi'$ is given by $\Phi(S)$, where $\Phi:\Pi\to\Pi$ is a
diffeomorphism with $\|\Phi-\id \|_{C^k}<\ep$.
\end{theorem}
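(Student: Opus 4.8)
The plan is to work in the ``almost flat'' regime, where the minimal surface equation becomes a small perturbation of $\De u=0$, and to prescribe the nodal set of the approximating harmonic function by combining a Cauchy--Kovalevskaya local construction with a Runge-type global approximation.

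\emph{Step 1: reduction to harmonic functions.} Write $F(u):=\Div\big(\nabla u/\sqrt{1+|\nabla u|^2}\big)$, so that $F(0)=0$ and $DF(0)=\De$. Since the Dirichlet problem for the Laplacian is an isomorphism $C^{m,\al}(\overline{\BB^n})\to C^{m-2,\al}(\overline{\BB^n})\times C^{m,\al}(\pa\BB^n)$ (here $m$ is a large integer depending on $k$), the implicit function theorem produces, for each boundary datum $g$ of sufficiently small $C^{m,\al}(\pa\BB^n)$ norm, a unique small solution $u_g$ of $F(u_g)=0$ with $u_g|_{\pa\BB^n}=g$, depending smoothly on $g$ and with $u_0=0$. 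Taking $g=\la\,w|_{\pa\BB^n}$, where $w$ is a \emph{harmonic polynomial} and $\la>0$ is small, thus gives a minimal graph $u_\la:=u_{\la w|_{\pa\BB^n}}$ on $\BB^n$; and since the linearization $Du_0[\la w|_{\pa\BB^n}]$ is the harmonic extension of $\la w|_{\pa\BB^n}$, which is $\la w$ itself, one obtains $\|u_\la-\la w\|_{C^{m,\al}(\overline{\BB^n})}=O(\la^2)$, so $\la^{-1}u_\la\to w$ in $C^m(\overline{\BB^n})$ as $\la\to0$. As $u_\la^{-1}(0)=(\la^{-1}u_\la)^{-1}(0)$, it is enough to produce a harmonic polynomial $w$ such that, on some open set $W\Subset\BB^n$, the set $w^{-1}(0)\cap W$ is a regular hypersurface that is a $C^k$-small perturbation of $S$: by the structural stability of transverse zero sets (a quantitative implicit function theorem) the nodal set of $u_\la$ inside $W$ will then be a further $C^k$-small perturbation of it, once $\la$ is small.

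\emph{Step 2: constructing $w$.} First I would approximate $S$ in $C^k$ by a compact, connected, orientable, real-analytic hypersurface-with-boundary $S'=\Phi_1(S)\Subset\BB^n$ with $\|\Phi_1-\id\|_{C^k}$ as small as desired, and extend $S'$ to a slightly larger embedded real-analytic hypersurface $\hat S'\supset S'$, still with nonempty boundary, in which $S'$ sits compactly. Using a global unit normal $\nu$ (orientability), the Cauchy--Kovalevskaya theorem yields an analytic function $v$, defined on a neighborhood of $S'$, with $\De v=0$, $v|_{\hat S'}=0$ and $\pa_\nu v|_{\hat S'}=1$; since $v$ agrees to first order with the signed distance to $\hat S'$, on a thin tube $K$ around $S'$ we have $\nabla v\neq0$ and $v^{-1}(0)\cap K$ a regular hypersurface with boundary containing $S'$ in its interior. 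Now the key point: since $\hat S'$ has nonempty boundary it does not separate $\RR^n$, so $\RR^n\setminus K$ is connected, and the Runge (Lax--Malgrange) theorem for the Laplacian lets us approximate $v$ in $C^m(K)$ by entire harmonic functions, hence by harmonic polynomials. Choosing such a $w$ close enough to $v$, we get $\nabla w\neq0$ and $w^{-1}(0)\cap K'=\hat\Phi_2(v^{-1}(0)\cap K')$ on a slightly shrunk tube $K'$, for a diffeomorphism $\hat\Phi_2$ that is $C^k$-close to $\id$ and may be taken equal to the identity outside $K$; in particular $\hat\Phi_2(S')\subset w^{-1}(0)\cap K'$ is a $C^k$-small perturbation of $S'$ whose boundary lies in the interior of $K'$.

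\emph{Step 3: assembly.} Feeding $w$ into Step~1 gives, for $\la$ small, a minimal graph $u:=u_\la$ on $\BB^n$ with $u^{-1}(0)\cap K'=\hat\Phi_3(w^{-1}(0)\cap K')$ (shrinking $K'$ a little further if necessary), for a diffeomorphism $\hat\Phi_3$ that is $C^k$-close to $\id$. Set $\Phi:=\hat\Phi_3\circ\hat\Phi_2\circ\Phi_1$, a diffeomorphism of $\BB^n$, which we identify with $\Pi$; choosing the three approximations fine enough makes $\|\Phi-\id\|_{C^k}<\ep$, and $\Phi(S)=\hat\Phi_3(\hat\Phi_2(S'))$ is a compact hypersurface-with-boundary lying in $u^{-1}(0)\cap K'$, with boundary in the interior of $K'$. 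Finally I would take $\Pi'$ to be (the image in the hyperplane $\{x_{n+1}=0\}$ of) a tubular neighborhood of $\Phi(S)$ whose thickness tapers to zero along $\pa\Phi(S)$, chosen thin enough that $\overline{\Pi'}\cap u^{-1}(0)=\Phi(S)$; then $\Si_u\cap\overline{\Pi'}=\Phi(S)$, which is the assertion. (Theorem~\ref{T.bound} then follows by taking $S$ a highly corrugated $(n-1)$-disk, so that $\cH^{n-1}(\Phi(S))$ exceeds the prescribed $c$; transversality of the full intersection is arranged by replacing $u$ with $u-c'$ for a generic small constant $c'$, again a minimal graph.)

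\emph{Main obstacle.} The reduction to $\De u=0$ and the nodal-set stability are soft, and Runge approximation is classical, so the real difficulty is Step~2: one cannot prescribe the nodal set of a \emph{smooth} harmonic function directly, because the Cauchy problem for $\De$ is ill posed, which forces the detour through a real-analytic approximation of $S$ and the Cauchy--Kovalevskaya theorem, and the behaviour near $\pa S$ must be controlled with some care (through the extension $\hat S'$ and the tapering of $\Pi'$). This is also precisely where the hypothesis $\pa S\neq\emptyset$ is used: it is what makes $\RR^n\setminus K$ connected, hence what permits approximation by harmonic \emph{polynomials} and thereby keeps the whole construction on the unit ball $\BB^n$, with no rescaling of the domain.
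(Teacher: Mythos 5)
Your proposal is correct and follows essentially the same strategy as the paper: prescribe the nodal set of a harmonic function via Cauchy--Kovalevskaya on an analytic extension of $S$ plus Lax--Malgrange approximation (using $\pa S\neq\emptyset$ for connectedness of the complement), then pass to the minimal surface equation in the almost-flat regime and invoke structural stability of the transverse zero set. The only difference is cosmetic: you promote the harmonic function to a minimal graph via the implicit function theorem for the Dirichlet problem, whereas the paper runs an explicit Picard-type iteration exploiting that the nonlinearity is cubic near zero; your treatment of the choice of $\Pi'$ and of the transversality in Theorem~\ref{T.bound} is, if anything, more careful than the paper's.
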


If one chooses $S$ to be a compact hypersurface
of $\BB^n$ with area $\cH^{n-1}(S)>c$ and $\ep$ is small enough, the immediate corollary is a codimension 1 analog of the existence of minimal graphs with arbitrarily large area:

\begin{corollary}\label{T.bound}
The $(n-1)$-dimensional measure of the intersection of a minimal graph
over the unit $n$-ball with a hyperplane is not uniformly
bounded. Specifically, given any constant~$c$, there is some~$u$
satisfying Equation~\eqref{eq:MS} for which
$\Si_u$ and~$\Pi$ intersect transversally but
\[
\cH^{n-1}(\Si_u\cap \Pi)>c\,.
\]
\end{corollary}

These minimal graphs with micro-oscillations play the opposite role with respect to area bounds that hyperplanes. Even hyperplanes with arbitrarily large  $n$-measure do not have larger $(n-1)$-measure of its intersection with $\Pi$  than the diameter of the ball. Our construction of minimal graphs leads to arbitrarily large $(n-1)$-measure of the transverse intersection but with $n$-measure less than twice the area of the ball.

The key point of Theorem \ref{T.level} is that it can be analyzed in the linear regime of the minimal surface equation. In fact, the strategy
that we have used to prove it (see Section~\ref{S.proof}) is to construct  harmonic
functions~$v$ on the ball that are small in a $C^k$~norm and whose
zero set $v^{-1}(0)$ contains the hypersurface $S$ up to a small diffeomorphism. The smallness assumption then
permits to promote them to solutions of the minimal surface equation
through an iterative procedure than does not change much the geometry of
the zero set.
Hence, in Corollary \ref{T.bound}, the large $(n-1)$-measure of the
intersection of the minimal graph $\Si_u$ with the hyperplane $\Pi$
comes from micro-oscillations that do not significantly contribute to
the curvature of the minimal hypersurface (which is almost flat).


To conclude, it is worth mentioning that Theorem~\ref{T.level} remains valid when we consider the intersection of a minimal hypersurface with the portion $\Pi'$ of any (non-vertical) hyperplane contained in $\BB^n\times \RR$ (in this case, the minimal hypersurface can be constructed as a graph over $\Pi'$).


\section{Proof of the main theorem}
\label{S.proof}

In this section we will prove Theorem~\ref{T.level}. For this, a well
known result of Whitney ensures that, by perturbing~$S$ a little if
necessary, one can assume that~$S$ is analytic. Now let us consider an
open extension $S'$ of~$S$ (that is, an open, connected, analytic hypersurface $S'$ of $\BB^n$
containing~$S$) and let us denote
by $\Om$ a small neighborhood of the
hypersurface~$S$ whose closure is contained in~$\BB^n$ and such that
$\RR^n\setminus\Om$ is connected. Such a choice of~$S'$ and~$\Om$ is
always possible because $S$ is connected and its boundary is
nonempty.

An important ingredient in the proof of the main theorem is the
construction of a harmonic function on $\RR^n$ for which a small
deformation of~$S'$ is a structurally stable (portion of a) connected
component of its zero set (similar to the construction in \cite{AMPA2013}):

\begin{lemma}\label{L.harmonic}
For any $\ep>0$ there is a harmonic function~$v$  on $\RR^n$ and some
$\de>0$ such that the zero set $u^{-1}(0)$ of any function~$u$ with
$\|u-v\|_{C^k(\Om)}<\de$ satisfies
\[
u^{-1}(0)\cap\Om'=\Psi(S')\,,
\]
where $\Om'$ is an open subset of $\Om$ and $\Psi:\RR^n\to\RR^n$ is a
diffeomorphism with $\|\Psi-\id \|_{C^k(\RR^n)}<\ep$.
\end{lemma}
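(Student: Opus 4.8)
The plan is to build $v$ by a Runge-type approximation argument, starting from a function that vanishes \emph{exactly} on $S'$ near $\Om$ and is a submersion there, and then replacing it by a global harmonic function that is $C^k$-close on a neighborhood of $\Om$. First, since $S'$ is an analytic hypersurface, we can choose an analytic function $f$ defined on a neighborhood $U$ of $\overline\Om$ such that $f^{-1}(0)\cap U=S'\cap U$ and $\nabla f\ne 0$ on $S'\cap U$; locally $f$ can be taken to be a defining function (a signed-distance-type coordinate), and analyticity of $S'$ lets us patch these together, shrinking $U$ if needed. The zero set of $f$ in $U$ is then a structurally stable hypersurface: any $g$ with $\|g-f\|_{C^1(U)}$ small has $g^{-1}(0)\cap U$ a graph over $S'\cap U$ via the implicit function theorem, giving the diffeomorphism $\Psi$ after a cutoff extension to all of $\RR^n$ (with $\|\Psi-\id\|_{C^k}$ controlled by the $C^k$-closeness, upon shrinking).

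Second, the key step is to approximate $f$ by a global harmonic function. Since $\RR^n\setminus\Om$ is connected (this is exactly why $\Om$ was chosen that way), $\RR^n\setminus\overline\Om$ has no bounded components, so one can apply the Lax--Malgrange / Runge approximation theorem for the Laplace operator: given any $\de'>0$ and any $f$ harmonic \emph{on a neighborhood of $\overline\Om$}, there is an entire harmonic function $v$ on $\RR^n$ with $\|v-f\|_{C^k(\overline\Om)}<\de'$. But $f$ as constructed is only analytic, not harmonic, so first I would replace $f$ by a genuinely harmonic approximant: near the compact analytic hypersurface $S'\cap \overline\Om$ one can solve a Cauchy problem — the Cauchy--Kovalevskaya theorem produces an analytic harmonic function $h$ in a (possibly thin) neighborhood of $S'\cap\overline\Om$ with $h=0$ and $\partial_\nu h=1$ on $S'$, so that $h^{-1}(0)=S'$ there and $\nabla h\ne0$ on $S'$. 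Shrinking $\Om$ to a tubular neighborhood of $S$ thin enough to lie in the domain of $h$, we then have a harmonic model on a neighborhood of $\overline\Om$; Runge approximation then delivers the entire harmonic $v$.

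Third, I would chase the quantitative dependence: choose $\de'$ so small that $\|v-h\|_{C^k(\overline\Om)}<\de'$ forces, via the implicit function theorem applied to $h$, that $v^{-1}(0)$ in a slightly smaller open set $\Om'\subset\Om$ is a $C^k$-small normal graph over $S'\cap\Om'$, hence equals $\Psi(S')$ for a diffeomorphism $\Psi$ of $\RR^n$ with $\|\Psi-\id\|_{C^k(\RR^n)}<\ep$ (extend the local graph map by a cutoff, and shrink $\de'$ again to absorb the cutoff constants). Structural stability is then automatic: set $\de:=\de'-\|v-h\|_{C^k(\overline\Om)}>0$, so that any $u$ with $\|u-v\|_{C^k(\Om)}<\de$ still satisfies $\|u-h\|_{C^k(\Om)}<\de'$ and the same implicit-function-theorem argument applies to $u$, yielding $u^{-1}(0)\cap\Om'=\Psi_u(S')$ with $\|\Psi_u-\id\|_{C^k}<\ep$; relabeling $\Psi_u$ as $\Psi$ (it may depend on $u$, which the statement allows) finishes the proof.

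The main obstacle I expect is the interface between the two "softnesses": arranging that the harmonic Cauchy solution $h$ is defined on a neighborhood large enough to contain the $\Om$ one actually wants (the Cauchy--Kovalevskaya neighborhood may be very thin, but since $S$ is compact and $\Om$ may be taken as thin as we like, this is manageable), and keeping track that the three successive shrinkings — (i) of $U$ to make $f$ a clean defining function, (ii) of $\Om$ into the domain of $h$, (iii) of $\Om'$ inside $\Om$ for the implicit function theorem — can all be done while preserving the topological hypothesis that $\RR^n\setminus\Om$ stay connected, which is what licenses the Runge step. None of these is deep, but the bookkeeping of constants ($\ep\mapsto$ size of graph perturbation $\mapsto \de' \mapsto \de$) is where care is required.
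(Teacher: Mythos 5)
Your proposal is correct and follows essentially the same route as the paper: a Cauchy--Kovalevskaya solution of $\De \tv=0$, $\tv|_{S'}=0$, $\pa_\nu\tv|_{S'}=1$ near $S'$, then Lax--Malgrange to globalize to an entire harmonic $v$, then structural stability of the regular zero set under small $C^k$ perturbations (the paper cites Thom's isotopy theorem where you run the implicit-function-theorem graph argument by hand, and your $\de'$/$\de$ bookkeeping matches the paper's $2\de$ triangle inequality). The initial detour through a merely analytic defining function $f$ is unnecessary, as you yourself note before discarding it.
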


\begin{proof}
Let us choose an orientation of~$S'$ and denote by $\nu$ the
corresponding unit normal vector. A natural way to define a harmonic function
associated with $S'$ and with some control on its zero set and on its gradient is via the following Cauchy problem:
\begin{equation}\label{eq:cauchyprob}
\De \tv =0, \qquad \tv \big|_{S'}=0, \qquad \frac{\pa\tv}{\pa \nu} \bigg|_{S'}=1.
\end{equation}
The Cauchy--Kowaleskaya theorem ensures the existence of a solution
~$\tv$ of the above problem in a small neighborhood of ~$S'$, which can be taken to
be~$\Om$ without any loss of generality.
Since $\RR^n\setminus \Om$ is connected, the Lax--Malgrange
approximation theorem \cite{Browder} ensures the existence of a harmonic function ~$v:\RR^n\rightarrow \RR$ such that
\begin{equation*}
\| v-\tv \|_{C^k(\Om)}<\de,
\end{equation*}
where $\de$ is a small quantity to be specified later.

Now let $u$ be close to~$v$ in the sense that $\|u-v\|_{C^k(\Om)}<\de$.  Then
\begin{equation*}
\|u-\tv\|_{C^k(\Om)}<2\de\,,
\end{equation*}
so, since $S$ is a component of the nodal set of $\tv$ and the gradient of
$\tv$ does not vanish by \eqref{eq:cauchyprob}, Thom's isotopy theorem
\cite[~Theorem ~20.2]{AR} implies that, for small enough~$\de$, there is an open subset $\Om'$ of
$\BB^n$ and a diffeomorphism $\Psi$
of $\RR^n$  with $\|\Psi-\id\|_{C^k(\RR^n)}<\ep$ such that
$\Psi(S')=u^{-1}(0)\cap\Om'$. The lemma then follows.
\end{proof}

The observation now is that one can construct a solution to the minimal
graph equation on the ball whose zero set is a small perturbation of
that of the harmonic function constructed in the previous lemma. More
precisely, we have the following:

\begin{lemma}\label{L.promote}
Given any  $\de>0$, there is a function~$u$ satisfying the minimal surface
equation~\eqref{eq:MS} in $\BB^n$ and a positive constant~$\la$ such
that  $\|\la u-v\|_{C^k(\BB^n)}<\de$.
\end{lemma}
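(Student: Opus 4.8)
The plan is to use the fact that, once the harmonic function $v$ from Lemma~\ref{L.harmonic} has been rescaled by a large factor, it becomes an approximate solution of the minimal surface equation, which can then be corrected to an exact solution by a contraction argument. The starting point is that a function $u$ solves~\eqref{eq:MS} in $\BB^n$ if and only if
\begin{equation*}
\De u=Q(u):=\frac{\langle\nabla u,\nabla^2u\,\nabla u\rangle}{1+|\nabla u|^2}\,,
\end{equation*}
and that the nonlinearity $Q$ is \emph{quadratically small in the gradient}: one has the pointwise bound $|Q(u)|\lesssim|\nabla u|^2|\nabla^2u|$, and, more generally, $Q$ can be written as $F(\nabla u,\nabla^2u)$ with $F$ a smooth function vanishing to second order in its first argument and to first order in its second.

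Fix a Hölder exponent $\al\in(0,1)$. Given a large parameter $\la>0$ to be chosen, I would look for a solution of~\eqref{eq:MS} of the form $u=\la^{-1}v+\phi$, with $\phi$ small and vanishing on $\pa\BB^n$; since $v$ is harmonic, this is equivalent to solving $\De\phi=Q(\la^{-1}v+\phi)$ in $\BB^n$ with $\phi|_{\pa\BB^n}=0$. Let $G$ denote the solution operator of the Dirichlet problem for the Laplacian on $\BB^n$, which maps $C^{k,\al}(\overline{\BB^n})$ boundedly into $C^{k+2,\al}(\overline{\BB^n})$ because $\pa\BB^n$ is smooth, and consider the map $T(\phi):=G\big(Q(\la^{-1}v+\phi)\big)$ on the complete metric space
\[
\cX_R:=\big\{\phi\in C^{k+2,\al}(\overline{\BB^n}):\ \phi|_{\pa\BB^n}=0,\ \|\phi\|_{C^{k+2,\al}}\leq R\big\},\qquad R:=\la^{-1}\,.
\]
On $\cX_R$ one has $\|\nabla(\la^{-1}v+\phi)\|_{C^{k+1,\al}}\lesssim\la^{-1}$ and $\|\nabla^2(\la^{-1}v+\phi)\|_{C^{k,\al}}\lesssim\la^{-1}$, so the Hölder estimates for $Q$ (using that $F$ vanishes to second order in its first slot) give $\|Q(\la^{-1}v+\phi)\|_{C^{k,\al}}\lesssim\la^{-3}$ and hence $\|T(\phi)\|_{C^{k+2,\al}}\lesssim\la^{-3}<R$ for $\la$ large, so that $T$ maps $\cX_R$ into itself; similarly, since the differential of $Q$ at functions whose first two derivatives are $O(\la^{-1})$ is small, $T$ is a contraction on $\cX_R$ once $\la$ is large. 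Banach's fixed point theorem then produces $\phi\in\cX_R$ with $\De\phi=Q(\la^{-1}v+\phi)$, so that $u:=\la^{-1}v+\phi$ solves~\eqref{eq:MS} in $\BB^n$ (and is in fact real-analytic by elliptic regularity). Moreover the fixed point obeys $\|\phi\|_{C^{k+2,\al}}=\|T(\phi)\|_{C^{k+2,\al}}\lesssim\la^{-3}$, whence $\|\la u-v\|_{C^k(\BB^n)}=\la\,\|\phi\|_{C^k(\BB^n)}\lesssim\la^{-2}<\de$ once $\la$ is chosen large enough, as required.

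The only step that demands real care is the pair of nonlinear estimates for $Q$ in Hölder norms — that on $\cX_R$ one has $\|Q(\la^{-1}v+\phi)\|_{C^{k,\al}}\lesssim\la^{-3}$ and that $\phi\mapsto Q(\la^{-1}v+\phi)$ has a Lipschitz constant that tends to zero as $\la\to\infty$. These follow by Taylor expanding $F=F(\nabla u,\nabla^2u)$ about the origin (where it vanishes to the orders indicated above) and invoking the standard product and composition inequalities in $C^{k,\al}$, with $\|v\|_{C^{k+2,\al}(\overline{\BB^n})}$ treated as a fixed constant; the genuine content there is simply to keep track of the powers of $\la^{-1}$. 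Everything else — the mapping properties of $G$ on the ball and the bootstrap of the fixed point to a classical solution — is standard. As an alternative to the hand-made iteration, one could also invoke directly the classical solvability of the Dirichlet problem for~\eqref{eq:MS} with small $C^{2,\al}$ boundary data on a ball, together with the $C^{k+2,\al}$-dependence of the solution on the boundary datum, applied with boundary datum $\la^{-1}v|_{\pa\BB^n}$, whose harmonic extension is exactly $\la^{-1}v$.
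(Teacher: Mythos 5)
Your proposal is correct and follows essentially the same route as the paper: both recast the minimal surface equation as $\Delta u = (\text{cubically small nonlinearity in }\nabla u,\nabla^2u)$, anchor the solution at the rescaled harmonic function $\la^{-1}v$, and solve perturbatively via Schauder estimates for the Dirichlet problem on the ball — the paper runs the Picard iteration explicitly where you invoke Banach's fixed point theorem, which is the same contraction argument. The final estimate $\|\la u-v\|\lesssim \la\cdot\la^{-3}=\la^{-2}$ is exactly the paper's $C\la\|F(u)\|\leq C\ep^2$ in your parametrization.
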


 \begin{proof}
Assuming that $k\geq2$ without loss of generality and taking any
$\al\in (0,1)$, let us define a function $F:C^{k,\al}(\BB^n)\to C^{k-2,\al}(\BB^n)$ as
\begin{equation*}
F(u):=\frac{1}{2}\nabla u\cdot\nabla \log\big(1+|\nabla u|^2\big)\,.
\end{equation*}
Equation \eqref{eq:MS} is then expressible as
 \begin{equation}\label{eq:MS2}
\De u-F(u)=0\,.
\end{equation}

Let $v$ be the harmonic function on $\RR^n$ that we constructed in
Lemma~\ref{L.harmonic}.
Take a small positive constant $\ep$ that will be fixed later and consider the iterative scheme
\begin{align}
  u_0&:=\ga v    \nonumber  \\
  u_{j+1}&:=\ga v+w_j       \label{defuvw}
\end{align}
where
\begin{equation*}
\ga:=\frac{\ep}{2\|v\|_{C^{k,\al}(\BB^n)}}
\end{equation*}
and the function $w_j$ is the unique solution to the boundary value problem
\begin{align}
\De w_j=F(u_j)\quad  \text{ in } \BB^n   \,, \qquad \label{eq:defwj}
w_j=0 \quad \text{ on } \pa\BB^n.
\end{align}

Our goal is to show that, for small enough $\ep$, $u_j$ converges in $C^{k,\al}(\BB^n)$ to a function $u$ that satisfies the minimal graph equation ~\eqref{eq:MS2}
in $\BB^n$ and is close to $\ga v$ in a suitable sense. To this end,
let us start by noticing that, as an application of the maximum
principle to the boundary problem \eqref{eq:defwj}, the functions $w_j$ must satisfy
\begin{equation*}
\|w_j\|_{C^0 (\BB^n)}\leq C\|F(u_j)\|_{C^0 (\BB^n)}.
\end{equation*}
Standard elliptic estimates then yield
 \begin{align*}
\|w_j\|_{C^{k,\al}(\BB^n)}&\leq C\big(\|w_j\|_{C^0
  (\BB^n)}+\|F(u_j)\|_{C^{k-2,\al}(\BB^n)}\big)\\
& \leq C\big(\|F(u_j)\|_{C^0
  (\BB^n)}+\|F(u_j)\|_{C^{k-2,\al}(\BB^n)}\big)\\
&\leq C \|F(u_j)\|_{C^{k-2,\al} (\BB^n)}\,.
\end{align*}

On the other hand, if we assume that $\|u_j\|_{C^{k,\al}} <\ep$, one
can exploit the above estimate to infer in Equation~\eqref{defuvw} that
\begin{align}
\|u_{j+1}\|_{C^{k,\al}(\BB^n)}&\leq \ga
\|v\|_{C^{k,\al}(\BB^n)}+\|w_j\|_{C^{k,\al}(\BB^n)}\notag\\
&\leq \frac{\ep}{2} + C\|F(u_j)\|_{C^{k-2,\al}(\BB^n)} \nonumber\\
&\leq\frac{\ep}{2}+C\|u_j\|_{C^{k,\al}(\BB^n)}^3 \notag\\
&\leq \frac{\ep}{2}+C\ep^3<\ep\,,\label{eq:induction}
\end{align}
so the norm of $u_{j+1}$ is less than~$\ep$ too. Here we have used that
\[
\|F(w)\| _{C^{k-2,\al}(\BB^n)}\leq C\|w\|_{C^{k,\al}(\BB^n)}^3
\]
and the fact that $\ga \|v\|_{C^{k,\al}(\BB^n)}$ and $C\ep^3$ are
bounded above by
$\ep/2$. Notice, in particular, that since the first function $u_0$ of the iteration satisfies
\begin{equation*}
\|u_0\|_{C^{k,\al}(\BB^n)}\leq \frac{\ep}{2}\,,
\end{equation*}
the induction argument \eqref{eq:induction} then implies that
\begin{equation}\label{eq:estuj}
\|u_j\|_{C^{k,\al}(\BB^n)}<\ep
\end{equation}
for all $j$.

To estimate the difference $u_{j+1}-u_j$, let us use the
bound~\eqref{eq:estuj} to write
\begin{align*}
\|F(u_j)-F(u_{j-1})\|_{C^{k-2,\al}(\BB^n)}&\leq
C\big(\|u_j\|_{C^{k,\al}(\BB^n)}^2+\|u_{j-1}\|_{C^{k,\al}(\BB^n)}^2\big)\|u_j-u_{j-1}\|_{C^{k,\al}(\BB^n)}\\
&\leq C\ep^2\|u_j-u_{j-1}\|_{C^{k,\al}(\BB^n)}\,.
\end{align*}
Since
\[
\De(u_{j+1}-u_u)=F(u_j)-F(u_{j-1})\quad \text{in }\BB^n\,,\qquad
u_{j+1}-u_j=0\quad\text{on }\pd\BB^n\,,
\]
standard elliptic estimates then yield
\begin{align}\label{eq:estdiff} 
\|u_{j+1}-u_j\|_{C^{k,\al}(\BB^n)}&\leq C\|F(u_j)-F(u_{j-1})\|_{C^{k-2,\al}(\BB^n)} \nonumber\\
&<C\ep^2\| u_j-u_{j-1}\|_{C^{k,\al}(\BB^n)}\,.
\end{align}
Taking $\ep$ small enough for $C\ep^2<1$, we infer from
\eqref{eq:estuj} and \eqref{eq:estdiff} that, as $j\to\infty$, $u_j$ converges in $C^{k,\al}(\BB^n)$ to some function $u$ with
\begin{equation}\label{eq:estu}
\|u\|_{C^{k,\al}(\BB^n)}\leq \ep.
\end{equation}
Since the sequence $w_j$ converges to~$w$ in $C^{k,\al}(\BB^n)$, the
function $u$ satisfies the equation
\begin{equation}\label{eq:uinteq}
u=\ga v+w,
\end{equation}
where~$w$ is the unique solution to the problem
\begin{align*}
\De w=F(u) \quad \text{in } \BB^n\,,\qquad w=0 \quad \text{on } \pa\BB^n.
\end{align*}
As $v$ is a harmonic function, one can then take the Laplacian
of~\eqref{eq:uinteq} to show that  $u$ is a solution of the minimal graph equation \eqref{eq:MS} with boundary conditions $u=\ga v$ on $\pa\BB^n$.

Taking $\la:=1/\ga$, one can now use the bound \eqref{eq:estu},
the relation \eqref{eq:uinteq} and the definition of $\ga$ to check that
\begin{align*}
\|\la u-v\|_{C^{k,\al}(\BB^n)}&=\la\|u-\ga
v\|_{C^{k,\al}(\BB^n)}\leq{C\la}\|F(u)\|_{C^{k-2,\al}(\BB^n)}\\
&\leq C\la \|u\|_{C^{k,\al}(\BB^n)}^3\leq C\ep^2<\de
\end{align*}
provided that $\ep$ is sufficiently small.
\end{proof}

 Theorem
\ref{T.level} readily follows from Lemmas \ref{L.harmonic} and \ref{L.promote}.

\section*{Acknowledgments}

The authors are indebted to Joaqu\'\i n P\'erez for his very detailed
explanations about bounds for the area of a minimal graph on the ball. The authors are supported by the ERC Starting Grants~633152 (A.E.\ and
M.A.G.-F.) and~335079
(D.P.-S.). M.A.G.-F. acknowledges the financial support of the Spanish
MINECO through a Severo Ochoa FPI scholarship. This work is supported in part by the
ICMAT--Severo Ochoa grant
SEV-2015-0554.

\bibliographystyle{amsplain}

\end{document}